\documentclass[11pt, a4paper]{amsart}
\usepackage[utf8]{inputenc}
\usepackage[margin=1in]{geometry}
\usepackage{amsmath, amssymb, amsthm}
\usepackage{mathrsfs}
\usepackage{graphicx}
\usepackage[colorlinks=true]{hyperref}
\hypersetup{urlcolor=blue, citecolor=red}
\usepackage[square,numbers]{natbib}

\newtheorem{theorem}{Theorem}[section]
\newtheorem{corollary}[theorem]{Corollary}
\newtheorem{lemma}[theorem]{Lemma}
\newtheorem{proposition}[theorem]{Proposition}

\theoremstyle{definition}
\newtheorem{definition}[theorem]{Definition}
\newtheorem{remark}[theorem]{Remark}
\newtheorem{example}[theorem]{Example}
\numberwithin{equation}{section}

\title{Weighted Fourier capacities and Frostman restriction estimates}

\author{Rodrigo León}
\address{Departamento de Matemática, Facultad de Ciencias Físicas y Matemáticas, Universidad de Concepción, Concepción, Chile.}
\email{rleon2020@udec.cl}

\author{Vicente Vergara}
\address{Departamento de Matemática, Facultad de Ciencias Físicas y Matemáticas, Universidad de Concepción, Concepción, Chile.}
\email{vvergaraa@udec.cl}

\subjclass[2020]{42B10, 42B35, 31C15, 28A78}
\keywords{weighted Fourier capacity, Frostman measures, Fourier restriction, Bessel potentials, tempered weights, Minkowski dimension, fractal sets.}

\date{\today}

\begin{document}

\begin{abstract}
We study a weighted Fourier capacity attached to compact sets,
\[
    c_{p,k}(K)=\inf\left\{\|k\widehat u\|_{L^p(\mathbb{R}^d)}:
    u\in C_0^\infty(\mathbb{R}^d),\ u\geq 1 \text{ on } K\right\},
\]
where $k$ is a positive tempered weight. The paper develops complementary upper and lower estimates for this quantity. The upper bounds are obtained from Frostman data, Fourier restriction estimates, and Bessel-potential majorants, which together provide admissible smooth majorants of the compact set. The lower bounds follow from a dual testing argument against positive measures supported on $K$, yielding estimates in terms of the weighted $L^{p'}$ size of their Fourier transforms. For sets of positive Lebesgue measure, testing with $\chi_K\,\mathrm{d}x$ connects the capacity with the weighted integrability of $\widehat{\chi_K}$. A Lebedev-type Sobolev criterion then gives non-triviality ranges governed by the upper Minkowski dimension of $\partial K$ and the growth of the weight. We also record consequences for Ahlfors-David regular sets and positive-measure Cantor-type examples.
\end{abstract}

\maketitle

\section{Introduction}
\label{sec:introduction}

Fourier-side quantities are a natural way to measure how the spatial size of a compact set interacts with frequency decay and fractional regularity. In classical potential theory, this interaction is often encoded through spatial kernels and energies. In the present paper we work directly on the Fourier side and study a weighted version of the Fourier capacity associated with a compact set.

Let $K\subset \mathbb{R}^d$ be compact, let $1\leq p\leq \infty$, and let $k$ be a positive tempered weight on $\mathbb{R}^d$. The main object is the number
\[
    c_{p,k}(K)
    =
    \inf\left\{
        \|k\widehat u\|_{L^p(\mathbb{R}^d)}:
        u\in C_0^\infty(\mathbb{R}^d),\
        u\geq 1 \text{ on } K
    \right\}.
\]
When $k\equiv 1$, this is the Fourier $p$-capacity considered in the line of questions raised by Maz'ya; see, for instance, \cite{maz2018seventy}. The presence of the weight allows the capacity to interact with prescribed frequency scales. For polynomial or Bessel-type weights this is closely related to fractional regularity on the Fourier side.

The paper develops two complementary mechanisms for estimating $c_{p,k}(K)$. The first one is an upper-bound mechanism based on Frostman measures. If a positive measure $\mu$ supported on $K$ has Frostman growth and Fourier decay, then restriction estimates of Mockenhaupt--Mitsis--Bak--Seeger type provide global integrability of $\widehat\mu$. Combined with Bessel potentials, mollification and a spatial cutoff, this produces admissible functions for $K$ and yields explicit upper bounds for $c_{p,k}(K)$. The resulting constants keep track of the weight, the restriction constant and the normalization needed to make the potential admissible.

The second mechanism is dual. If $\nu$ is a positive finite measure supported on $K$ and
\[
    k^{-1}(\cdot)\widehat\nu(-\cdot)\in L^{p'}(\mathbb{R}^d),
\]
then every admissible function can be tested against $\nu$, giving the lower bound
\[
    c_{p,k}(K)
    \geq
    \frac{\nu(K)}
    {\|k^{-1}(\cdot)\widehat\nu(-\cdot)\|_{L^{p'}(\mathbb{R}^d)}}.
\]
For sets of positive Lebesgue measure, a distinguished testing measure is $\nu=\chi_K\,\mathrm{d}x$. In that case the lower bound is governed by the weighted integrability of $\widehat{\chi_K}$. We use a Lebedev-type criterion to relate this integrability to the upper Minkowski dimension of $\partial K$.

In particular, if $|K|>0$, if $\alpha=\overline{\dim}_M(\partial K)$, and if the weight satisfies
\[
    \frac{1}{k(\xi)}\leq C(1+|\xi|^2)^{-\sigma/2},
\]
then the Lebedev mechanism gives non-triviality of $c_{p,k}(K)$ in the range
\[
    1\leq p'<2,
    \qquad
    p'>\frac{2d}{2d-\alpha+2\sigma},
\]
provided this range is non-empty. This shows explicitly how the weight compensates, or amplifies, the boundary roughness detected by the Fourier transform of $\chi_K$.

The main contributions of the paper can be summarized as follows. First, we formulate the weighted Fourier capacity in an operative framework adapted to compact sets and tempered weights. Second, we prove a dual lower bound against arbitrary positive testing measures supported on $K$. Third, using Frostman growth, Fourier decay and the Mockenhaupt--Mitsis--Bak--Seeger input, we obtain upper bounds by constructing admissible Bessel-potential majorants. Fourth, we combine the upper and lower mechanisms into bilateral estimates. Finally, we derive a weighted Lebedev lower bound for positive-measure sets and discuss Ahlfors-David and fat Cantor examples.

The paper is organized as follows. Section~\ref{sec:weighted-fourier-capacity} fixes the Fourier convention, the class of tempered weights, and the basic operative properties of $c_{p,k}(K)$. Section~\ref{sec:frostman-restriction} records the Frostman and Fourier-decay hypotheses and the restriction estimate used later. Section~\ref{sec:capacity-bounds} proves the upper and bilateral capacity estimates. Section~\ref{sec:lebedev-examples} develops the weighted Lebedev criterion and presents the examples.

\section{Weighted Fourier capacity and operational preliminaries}\label{sec:weighted-fourier-capacity}

This section fixes the notation used throughout the paper and records the basic facts about the number $c_{p,k}(K)$ used later. The capacity is treated as an operative Fourier quantity attached to $K$. Its usefulness comes from the fact that every admissible function can be tested against measures supported on $K$ and, in particular, against $\chi_K$ when $K$ has positive Lebesgue measure.

\subsection{Fourier convention and tempered weights}\label{subsec:fourier-convention-weights}

We write $\mathcal{S}(\mathbb{R}^d)$ for the Schwartz class, that is, the space of smooth functions $\varphi$ such that
\[
    \sup_{x\in\mathbb{R}^d}
    |x^\alpha \partial^\beta \varphi(x)|<\infty
\]
for every pair of multi-indices $\alpha,\beta$. We also write
$C_0^\infty(\mathbb{R}^d)$ for the space of smooth compactly supported functions. With the Fourier convention \eqref{eq:fourier-convention}, the Fourier transform maps $\mathcal{S}(\mathbb{R}^d)$ onto itself, and every function in $C_0^\infty(\mathbb{R}^d)$ belongs to $\mathcal{S}(\mathbb{R}^d)$. Hence, if $u\in C_0^\infty(\mathbb{R}^d)$, then $\widehat u$ is rapidly decreasing. We use these standard facts throughout; see, for instance, \cite[Chapter 5]{stein2011fourier} or \cite[Chapter 8]{folland2009fourier}.

For $f\in L^1(\mathbb{R}^d)$ we use the Fourier transform
\begin{equation}\label{eq:fourier-convention}
    \widehat f(\xi)=\int_{\mathbb{R}^d} f(x)e^{-2\pi i x\cdot \xi}\,\mathrm{d}x,
    \qquad \xi\in \mathbb{R}^d,
\end{equation}
extended to tempered distributions in the usual way. If $1\leq p\leq \infty$, the conjugate exponent is denoted by $p'$, with the usual convention $1/\infty=0$.

For a bounded set $F\subset\mathbb{R}^d$ and $\delta>0$, we write
\[
    F_\delta
    :=
    \{x\in\mathbb{R}^d:\operatorname{dist}(x,F)<\delta\}.
\]
The upper Minkowski dimension of $F$ is
\[
    \overline{\dim}_{M}F
    :=
    \inf\left\{
    0\leq \alpha\leq d:
    |F_\delta|=O(\delta^{d-\alpha})
    \text{ as }\delta\downarrow0
    \right\}.
\]
Equivalently, $\overline{\dim}_{M}F\leq\alpha$ means that, for every
$\varepsilon>0$, there are constants $C_{\varepsilon,F}>0$ and
$\delta_{\varepsilon,F}>0$ such that
\[
    |F_\delta|
    \leq
    C_{\varepsilon,F}\delta^{d-\alpha-\varepsilon},
    \qquad 0<\delta<\delta_{\varepsilon,F}.
\]

A measurable function $k:\mathbb{R}^d\to (0,\infty)$ is called a tempered weight if there are constants $C_k,N_k>0$ such that
\begin{equation}\label{eq:tempered-weight}
    k(\xi+\eta)\leq C_k(1+|\xi|)^{N_k}k(\eta),
    \qquad \xi,\eta\in\mathbb{R}^d.
\end{equation}
Equivalently, after changing the constants, translations of $k$ are controlled by polynomial factors. We shall use the associated moderation function
\begin{equation}\label{eq:moderation-function}
    M_k(\xi):=\sup_{\eta\in\mathbb{R}^d}\frac{k(\xi+\eta)}{k(\eta)}.
\end{equation}
By \eqref{eq:tempered-weight}, $M_k(\xi)\leq C_k(1+|\xi|)^{N_k}$. Typical examples are the polynomial weights
\begin{equation}\label{eq:polynomial-weight}
    k_s(\xi)=(1+|\xi|^2)^{s/2},
    \qquad s\in\mathbb{R}.
\end{equation}

For $1\leq p\leq \infty$ and a tempered weight $k$, set
\begin{equation}\label{eq:weighted-fourier-seminorm}
    \|u\|_{p,k}:=\|k\widehat u\|_{L^p(\mathbb{R}^d)},
\end{equation}
whenever the right-hand side is meaningful and finite. In particular, every $u\in C_0^\infty(\mathbb{R}^d)$ satisfies
$\|u\|_{p,k}<\infty$. We shall use the standard multiplier estimate for H\"ormander-type spaces: if $u$ is such that $\|u\|_{p,k}<\infty$ and $\phi\in\mathcal{S}(\mathbb{R}^d)$, then
\begin{equation}\label{eq:schwartz-multiplier-estimate}
    \|\phi u\|_{p,k}
    \leq \|M_k\widehat\phi\|_{L^1(\mathbb{R}^d)}\|u\|_{p,k}.
\end{equation}
This is just Young's inequality applied to
\[
    k(\xi)\widehat{\phi u}(\xi)
    =k(\xi)\int_{\mathbb{R}^d}\widehat\phi(\xi-\eta)\widehat u(\eta)\,\mathrm{d}\eta
\]
and the definition of $M_k$. The quantity $\|M_k\widehat\phi\|_{L^1}$ is finite because $M_k$ has polynomial growth and $\widehat\phi$ is rapidly decreasing.

\subsection{The number $c_{p,k}(K)$}\label{subsec:capacity-definition}

\begin{definition}\label{def:weighted-fourier-capacity}
Let $K\subset\mathbb{R}^d$ be compact, let $1\leq p\leq \infty$, and let $k$ be a tempered weight. The weighted Fourier $(p,k)$-capacity of $K$ is
\begin{equation}\label{eq:weighted-fourier-capacity}
    c_{p,k}(K):=
    \inf\left\{
    \|u\|_{p,k}:
    u\in C_0^\infty(\mathbb{R}^d),\ u\geq 1\text{ on }K
    \right\}.
\end{equation}
We will call a function $u\in C^\infty_0(\mathbb{R}^d)$ an ``admissible function" for the compact set $K$ if it satisfies the condition $u\ge1$ on $K$. The admissibility condition is pointwise and real-valued: an admissible test function may be taken real-valued, but it is not required to be non-negative away from $K$.
\end{definition}

The following elementary facts are the only structural properties of $c_{p,k}$ used in the sequel.

\begin{lemma}[Finiteness and monotonicity]\label{lem:capacity-finiteness-monotonicity}
Let $1\leq p\leq \infty$ and let $k$ be a tempered weight.
\begin{enumerate}
    \item If $K\subset\mathbb{R}^d$ is compact, then $c_{p,k}(K)<\infty$.
    \item If $K_1\subset K_2$ are compact, then
    \[
        c_{p,k}(K_1)\leq c_{p,k}(K_2).
    \]
    \item $c_{p,k}(\emptyset)=0$.
\end{enumerate}
\end{lemma}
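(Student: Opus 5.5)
The three items are elementary, and I would prove each one directly from Definition~\ref{def:weighted-fourier-capacity} together with the Schwartz-class facts of Subsection~\ref{subsec:fourier-convention-weights}.

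\emph{Item (1).} The plan is to exhibit a single admissible function. Since $K$ is compact, it lies in some ball $B(0,R)$. I take a smooth cutoff $u\in C_0^\infty(\mathbb{R}^d)$ with $0\le u\le 1$ and $u\equiv1$ on $B(0,R)$; this is the standard bump construction, a mollified indicator of a slightly larger ball. Then $u\ge1$ on $K$, so $u$ is admissible. It remains to check that $\|k\widehat u\|_{L^p}<\infty$, which is the only step needing an estimate. Applying \eqref{eq:tempered-weight} with $\eta=0$ gives $k(\xi)\le C_k k(0)(1+|\xi|)^{N_k}$, so $k$ has at most polynomial growth. Since $\widehat u\in\mathcal{S}(\mathbb{R}^d)$, we have $|\widehat u(\xi)|\le C_N(1+|\xi|)^{-N}$ for every $N$. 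Choosing $N>N_k+d+1$ makes $k\widehat u$ bounded and integrable, and hence in every $L^p$ with $1\le p\le\infty$. Therefore $c_{p,k}(K)\le\|u\|_{p,k}<\infty$. The paper already notes that every $u\in C_0^\infty$ satisfies $\|u\|_{p,k}<\infty$, so this step can simply cite that remark once the polynomial growth bound on $k$ is written down.

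\emph{Item (2).} This is inclusion of admissible classes. If $K_1\subset K_2$ and $u$ is admissible for $K_2$, then $u\ge1$ on $K_2\supset K_1$, so $u$ is admissible for $K_1$. The infimum defining $c_{p,k}(K_1)$ is therefore taken over a larger set, which gives $c_{p,k}(K_1)\le c_{p,k}(K_2)$.

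\emph{Item (3).} For $K=\emptyset$ the condition $u\ge1$ on $K$ is vacuous, so $u\equiv0\in C_0^\infty(\mathbb{R}^d)$ is admissible. Since $\widehat 0=0$, we get $\|0\|_{p,k}=0$. Combined with the fact that the quantity is nonnegative, this yields $c_{p,k}(\emptyset)=0$. None of the three steps is a genuine obstacle; the only point requiring care is the polynomial growth of $k$ used in item (1).
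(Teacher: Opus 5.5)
Your proposal is correct and follows the paper's proof essentially step for step. You use the same bump function with polynomial growth of $k$ for finiteness and the same inclusion of admissible classes for monotonicity; the only variation is in item (3), where you take $u\equiv 0$ directly instead of the paper's $\varepsilon\phi$ with $\varepsilon\downarrow 0$, and both are valid since the zero function belongs to $C_0^\infty(\mathbb{R}^d)$.
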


\begin{proof}
We first prove finiteness. Since $K$ is compact, there exists $R>0$ such that
\[
    K\subset B(0,R).
\]
Choose $\phi\in C_0^\infty(\mathbb{R}^d)$ such that
\[
    \phi(x)=1 \quad \text{for } x\in B(0,R).
\]
Then $\phi$ is admissible for $K$. It remains to check that
$\|\phi\|_{p,k}<\infty$.

Since $\phi\in C_0^\infty(\mathbb{R}^d)$, we have
$\widehat\phi\in\mathcal{S}(\mathbb{R}^d)$. In particular, for every
$N>0$ there is a constant $C_N$ such that
\[
    |\widehat\phi(\xi)|\leq C_N(1+|\xi|)^{-N},
    \qquad \xi\in\mathbb{R}^d.
\]
On the other hand, applying \eqref{eq:tempered-weight} with $\eta=0$
gives
\[
    k(\xi)\leq C_k k(0)(1+|\xi|)^{N_k}.
\]
Therefore
\[
    k(\xi)|\widehat\phi(\xi)|
    \leq C\, (1+|\xi|)^{N_k-N}.
\]
Choosing $N$ sufficiently large gives
$k\widehat\phi\in L^p(\mathbb{R}^d)$, with the usual interpretation
when $p=\infty$. Thus
\[
    c_{p,k}(K)\leq \|\phi\|_{p,k}<\infty.
\]

We next prove monotonicity. If $K_1\subset K_2$, then every admissible
test function for $K_2$ is also admissible for $K_1$. Hence the infimum
over admissible functions for $K_1$ is taken over a larger class, and
therefore
\[
    c_{p,k}(K_1)\leq c_{p,k}(K_2).
\]

Finally, every function in $C_0^\infty(\mathbb{R}^d)$ is admissible for
the empty set. Fix any $\phi\in C_0^\infty(\mathbb{R}^d)$. For every
$\varepsilon>0$, the function $\varepsilon\phi$ is admissible for
$\emptyset$, and by homogeneity,
\[
    c_{p,k}(\emptyset)\leq \varepsilon\|\phi\|_{p,k}.
\]
Letting $\varepsilon\downarrow0$ gives $c_{p,k}(\emptyset)=0$.
\end{proof}

\begin{lemma}[Outer stability along compact supersets]\label{lem:capacity-outer-stability-compact}
Let $K\subset\mathbb{R}^d$ be compact. If $a>c_{p,k}(K)$, then there exists an open set $U\supset K$ such that
\[
    c_{p,k}(C)<a
\]
for every compact set $C\subset U$.
\end{lemma}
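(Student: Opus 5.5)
Since $a>c_{p,k}(K)$, the definition of the infimum gives an admissible function $u\in C_0^\infty(\mathbb{R}^d)$ for $K$ with $\|u\|_{p,k}<a$. The plan is to enlarge the region where $u$ exceeds $1$ by a small dilation of the threshold. We cannot use the open set $\{u>1\}$ directly, because admissibility only gives $u\geq 1$ on $K$, and $K$ need not lie in $\{u>1\}$. To fix this, choose $\lambda>1$ so close to $1$ that $\lambda\|u\|_{p,k}<a$; this is possible because $\|u\|_{p,k}<a$ strictly. The case $\|u\|_{p,k}=0$ is harmless: then $k\widehat u=0$, so $u=0$ because $k>0$, which contradicts $u\geq1$ on a nonempty $K$. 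If $K=\emptyset$, any open $U$ works as long as the claim holds for the compact sets inside it, and the argument below covers that case anyway.

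Next set $v:=\lambda u\in C_0^\infty(\mathbb{R}^d)$, which we take real-valued as the definition allows, and define
\[
    U:=\{x\in\mathbb{R}^d:\ u(x)>\lambda^{-1}\}=\{x:\ v(x)>1\}.
\]
Continuity of $u$ makes $U$ open. It contains $K$, since on $K$ we have $u\geq 1>\lambda^{-1}$.

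Finally, let $C\subset U$ be compact. Then $v>1\geq 1$ on $C$, so $v$ is admissible for $C$. Homogeneity of the seminorm then gives
\[
    c_{p,k}(C)\leq \|v\|_{p,k}=\lambda\|u\|_{p,k}<a.
\]
This proves the claim, and $U$ does not depend on $C$.

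The only step that needs care is the first: an admissible $u$ need not be strictly larger than $1$ on $K$. The rescaling by $\lambda>1$ is designed to produce strict inequality while keeping the norm below $a$, using the strict gap $a-\|u\|_{p,k}>0$. No Fourier-side estimates are needed, only the homogeneity $\|\lambda u\|_{p,k}=\lambda\|u\|_{p,k}$ and the continuity of test functions.
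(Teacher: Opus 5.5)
Your proof is correct and is essentially the paper's argument. The paper takes $U=\{u>1-\delta\}$ and tests with $(1-\delta)^{-1}u$, which is your construction with $\lambda=(1-\delta)^{-1}$; your side remarks on $\|u\|_{p,k}=0$ and $K=\emptyset$ are unnecessary, since the same choice of $\lambda$ works verbatim in those cases.
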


\begin{proof}
Choose $u\in C_0^\infty(\mathbb{R}^d)$ admissible for $K$ with $\|u\|_{p,k}<a$. Pick $\delta\in(0,1)$ so small that $(1-\delta)^{-1}\|u\|_{p,k}<a$, and set
\[
    U:=\{x\in\mathbb{R}^d:u(x)>1-\delta\}.
\]
Then $U$ is open and contains $K$. If $C\subset U$ is compact, the function $(1-\delta)^{-1}u$ is admissible for $C$, and therefore
\[
    c_{p,k}(C)
    \leq (1-\delta)^{-1}\|u\|_{p,k}<a.
\]
\end{proof}

\begin{remark}\label{rem:localization-and-sums}
The triangle inequality gives
\[
    \|u_1+u_2\|_{p,k}\leq \|u_1\|_{p,k}+\|u_2\|_{p,k}
\]
for admissible test functions. When sums or localizations are used below, the corresponding admissibility and Fourier-side bounds are checked at the point of use. In particular, multiplication by a Schwartz cutoff is controlled by \eqref{eq:schwartz-multiplier-estimate}, with constants determined by the cutoff and the moderation function $M_k$.
\end{remark}

\subsection{Testing admissible functions against measures}\label{subsec:testing-against-measures}

The basic bridge between $c_{p,k}$ and measures supported on $K$ is duality. This is the mechanism by which Frostman measures enter the paper.

\begin{proposition}[Dual lower bound]\label{prop:capacity-dual-lower-bound}
Let $1\leq p\leq \infty$, let $k$ be a tempered weight, and let $K\subset\mathbb{R}^d$ be compact. Let $\mu$ be a finite positive Borel measure supported on $K$. Assume that
\begin{equation}\label{eq:dual-measure-integrability}
    k^{-1}(\cdot)\widehat\mu(-\cdot)\in L^{p'}(\mathbb{R}^d).
\end{equation}
Then
\begin{equation}\label{eq:capacity-dual-lower-bound}
    c_{p,k}(K)
    \geq
    \frac{\mu(K)}{\|k^{-1}(\cdot)\widehat\mu(-\cdot)\|_{L^{p'}(\mathbb{R}^d)}}.
\end{equation}
Here the quotient is interpreted as $0$ if the denominator is infinite.
\end{proposition}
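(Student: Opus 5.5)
The plan is to test an arbitrary admissible function against $\mu$ and move the pairing to the Fourier side. Fix $u\in C_0^\infty(\mathbb{R}^d)$ admissible for $K$. If the denominator in \eqref{eq:capacity-dual-lower-bound} is infinite there is nothing to prove, and if $\mu(K)=0$ the bound is trivial. We may also assume $u$ is real-valued; if complex values were allowed, we would replace $u$ by $\operatorname{Re}u$. Since $\widehat{\operatorname{Re}u}(\xi)=\tfrac12(\widehat u(\xi)+\overline{\widehat u(-\xi)})$, and the estimate will be applied to $u$ itself, this replacement is harmless for our purpose. Because $u\geq1$ on $K$ and $\mu$ is positive and supported on $K$, we get
\[
    \mu(K)\leq \int_K u\,\mathrm{d}\mu=\int_{\mathbb{R}^d}u\,\mathrm{d}\mu .
\]

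Next, since $u\in\mathcal{S}(\mathbb{R}^d)$, Fourier inversion gives $u(x)=\int \widehat u(\xi)e^{2\pi i x\cdot\xi}\,\mathrm{d}\xi$, with absolutely convergent integral. Since $\mu$ is finite, Fubini applies, and
\[
    \int u\,\mathrm{d}\mu=\int_{\mathbb{R}^d}\widehat u(\xi)\int e^{2\pi i x\cdot\xi}\,\mathrm{d}\mu(x)\,\mathrm{d}\xi
    =\int_{\mathbb{R}^d}\widehat u(\xi)\,\widehat\mu(-\xi)\,\mathrm{d}\xi ,
\]
where $\widehat\mu(\xi)=\int e^{-2\pi i x\cdot\xi}\,\mathrm{d}\mu(x)$. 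We then write the integrand as $(k\widehat u)(\xi)\cdot k^{-1}(\xi)\widehat\mu(-\xi)$, which is legitimate since $k>0$. Hölder's inequality gives
\[
    \mu(K)\leq \Bigl|\int \widehat u(\xi)\widehat\mu(-\xi)\,\mathrm{d}\xi\Bigr|\leq \|u\|_{p,k}\,\|k^{-1}(\cdot)\widehat\mu(-\cdot)\|_{L^{p'}}.
\]
The left side is real, so taking absolute values is harmless, and the endpoint cases $p=1$ and $p=\infty$ are covered by the usual $L^1$–$L^\infty$ pairing.

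Finally, dividing by the finite, nonzero denominator and taking the infimum over admissible $u$ yields \eqref{eq:capacity-dual-lower-bound}. The denominator is nonzero because $|\widehat\mu(0)|=\mu(K)>0$ and $\widehat\mu$ is continuous, so $k^{-1}\widehat\mu(-\cdot)$ does not vanish a.e. The only step needing care is the justification of Fubini and inversion. This is routine because $\widehat u\in L^1$ and $\mu$ is finite, so no approximation argument is required. The hypothesis \eqref{eq:dual-measure-integrability} is used only to ensure that the Hölder bound is finite and therefore informative.
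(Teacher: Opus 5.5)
Your proof is correct and follows the paper's argument: you test an admissible $u$ against $\mu$, rewrite $\int u\,\mathrm{d}\mu$ as $\int \widehat u(\xi)\widehat\mu(-\xi)\,\mathrm{d}\xi$, apply H\"older, and take the infimum. You justify the Fourier-side identity explicitly via absolute convergence of the inversion integral and Fubini, where the paper only invokes ``Fourier inversion in the distributional pairing.'' The detour through $\operatorname{Re}u$ is unnecessary, since $u\geq1$ on $K$ already makes $u$ real on the support of $\mu$, but it is harmless because you never use it.
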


\begin{proof}
Let $u\in C_0^\infty(\mathbb{R}^d)$ be admissible for $K$. Since $\mu$ is positive and supported on $K$,
\[
    \mu(K)\leq \int_K u(x)\,\mathrm{d}\mu(x)
    =\int_{\mathbb{R}^d}u(x)\,\mathrm{d}\mu(x).
\]
By Fourier inversion in the distributional pairing,
\[
    \int_{\mathbb{R}^d}u(x)\,\mathrm{d}\mu(x)
    =\int_{\mathbb{R}^d}\widehat u(\xi)\widehat\mu(-\xi)\,\mathrm{d}\xi.
\]
Therefore, by H\"older's inequality,
\[
    \mu(K)
    \leq
    \|k\widehat u\|_{L^p(\mathbb{R}^d)}
    \|k^{-1}(\cdot)\widehat\mu(-\cdot)\|_{L^{p'}(\mathbb{R}^d)}.
\]
Taking the infimum over admissible $u$ proves \eqref{eq:capacity-dual-lower-bound}.
\end{proof}

\begin{corollary}[Testing with Frostman measures]\label{cor:frostman-measure-lower-bound}
Let $\mu$ be a Frostman measure supported on a compact set $K$, normalized so that $\mu(K)>0$. If \eqref{eq:dual-measure-integrability} holds, then $c_{p,k}(K)>0$ and the quantitative lower bound \eqref{eq:capacity-dual-lower-bound} holds.
\end{corollary}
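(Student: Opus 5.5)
This is a direct specialization of Proposition~\ref{prop:capacity-dual-lower-bound}, so I will simply check that its hypotheses are met and then read off both conclusions. The Frostman growth condition on $\mu$ is not needed for the lower bound itself. All that is used is that $\mu$ is a finite positive Borel measure supported on $K$ with $\mu(K)>0$. A Frostman measure is by definition finite and positive; here finiteness follows from $\mu(\mathbb{R}^d)=\mu(K)$ with $K$ compact, or from the growth bound applied to a ball containing $K$. Together with the integrability assumption \eqref{eq:dual-measure-integrability}, this places $\mu$ exactly within the scope of Proposition~\ref{prop:capacity-dual-lower-bound}, which gives \eqref{eq:capacity-dual-lower-bound}.

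For strict positivity, set
\[
    A:=\|k^{-1}(\cdot)\widehat\mu(-\cdot)\|_{L^{p'}(\mathbb{R}^d)}.
\]
By \eqref{eq:dual-measure-integrability} we have $A<\infty$, so the quotient in \eqref{eq:capacity-dual-lower-bound} is not the degenerate value $0$. It remains to see that $A>0$. Since $\mu$ is a finite measure, $\widehat\mu$ is continuous, and $\widehat\mu(0)=\mu(K)>0$. Hence $|\widehat\mu(-\xi)|>\mu(K)/2$ on some ball around $0$. On that ball $k^{-1}>0$, because $k$ takes values in $(0,\infty)$. The integrand is therefore positive on a set of positive measure, so $A>0$. (For $p'=\infty$ the same observation gives a positive essential supremum.) Consequently
\[
    c_{p,k}(K)\geq \mu(K)/A>0 .
\]

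No step here is a real obstacle. The one point that needs care is making sure the denominator is neither $0$ nor $\infty$, and this is handled by the continuity of $\widehat\mu$ at the origin together with the assumed integrability. I would also note explicitly that positivity of $\mu$ is what makes the first inequality in the proof of Proposition~\ref{prop:capacity-dual-lower-bound}, namely $\mu(K)\le\int u\,\mathrm{d}\mu$, valid.
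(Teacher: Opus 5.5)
Your proposal is correct and is the same argument as the paper, which simply applies Proposition~\ref{prop:capacity-dual-lower-bound} to $\mu$. Your check that the denominator is finite and nonzero, via continuity of $\widehat\mu$ at $0$, is a valid detail the paper leaves implicit; it also follows from the H\"older step there, since $\mu(K)>0$. The only slip is the remark that compactness of $K$ alone gives $\mu(K)<\infty$, which is false for general Borel measures. Finiteness is, however, built into the paper's definition of Frostman growth and also follows from the growth bound, as you note.
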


\begin{proof}
This is Proposition \ref{prop:capacity-dual-lower-bound} applied to the positive measure $\mu$.
\end{proof}

\begin{corollary}[Testing with $\chi_K$]\label{cor:indicator-lower-bound}
Let $K\subset\mathbb{R}^d$ be compact with positive Lebesgue measure. If
\begin{equation}\label{eq:indicator-integrability-condition}
    k^{-1}(\cdot)\widehat{\chi_K}(-\cdot)
    \in L^{p'}(\mathbb{R}^d),
\end{equation}
then
\begin{equation}\label{eq:indicator-lower-bound}
    c_{p,k}(K)
    \geq
    \frac{|K|}{\|k^{-1}(\cdot)\widehat{\chi_K}(-\cdot)\|_{L^{p'}(\mathbb{R}^d)}}.
\end{equation}
In particular, under \eqref{eq:indicator-integrability-condition}, $c_{p,k}(K)>0$.
\end{corollary}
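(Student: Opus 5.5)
This corollary is Proposition~\ref{prop:capacity-dual-lower-bound} applied to the measure $\mu=\chi_K\,\mathrm{d}x$. Only three things need checking: that this measure is admissible, that its total mass is $|K|$, and that its Fourier transform is $\widehat{\chi_K}$.

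First, $\mu(E):=|E\cap K|$ is a finite positive Borel measure, since $K$ is compact and hence $|K|<\infty$. It is supported on $K$ because $\mu(\mathbb{R}^d\setminus K)=0$. Its mass is $\mu(K)=|K|>0$. Second, $\chi_K\in L^1(\mathbb{R}^d)$, so by the convention \eqref{eq:fourier-convention}
\[
\widehat\mu(\xi)=\int_K e^{-2\pi i x\cdot\xi}\,\mathrm{d}x=\widehat{\chi_K}(\xi).
\]
This is a bounded continuous function. Hence hypothesis \eqref{eq:dual-measure-integrability} is exactly \eqref{eq:indicator-integrability-condition}.

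Under this hypothesis, \eqref{eq:capacity-dual-lower-bound} gives \eqref{eq:indicator-lower-bound} directly. For the last assertion, the denominator is finite by assumption. It is also nonzero: if it vanished, then $\widehat{\chi_K}=0$ almost everywhere, and by continuity everywhere, so in particular $|K|=\widehat{\chi_K}(0)=0$, a contradiction. The numerator $|K|$ is positive, so the quotient is strictly positive and $c_{p,k}(K)>0$.

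The only point needing a moment's care is the pairing identity $\int u\,\mathrm{d}\mu=\int\widehat u(\xi)\widehat\mu(-\xi)\,\mathrm{d}\xi$ used inside the proposition. Here it is elementary and needs no distributional argument. Since $u\in C_0^\infty$, we have $\widehat u\in\mathcal S\subset L^1$ and Fourier inversion holds pointwise. Because $\chi_K\in L^1$, Fubini then gives
\[
\int_K u\,\mathrm{d}x=\int\widehat u(\xi)\,\widehat{\chi_K}(-\xi)\,\mathrm{d}\xi.
\]
So the corollary follows with no further obstacle.
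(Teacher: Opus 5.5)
Your proof is correct and is the paper's argument: apply Proposition~\ref{prop:capacity-dual-lower-bound} to $\mathrm{d}\mu=\chi_K\,\mathrm{d}x$, using $\mu(K)=|K|$ and $\widehat\mu=\widehat{\chi_K}$. Your extra checks — that the denominator cannot vanish because $\widehat{\chi_K}$ is continuous with $\widehat{\chi_K}(0)=|K|>0$, and the elementary Fubini justification of the pairing identity — are sound details that the paper leaves implicit.
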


\begin{proof}
Apply Proposition \ref{prop:capacity-dual-lower-bound} to the measure $\mathrm{d}\mu=\chi_K\,\mathrm{d}x$.
\end{proof}

\begin{remark}\label{rem:role-of-frostman-and-indicator}
The lower bounds above explain why the number $c_{p,k}(K)$ is naturally linked both to Frostman measures and to $\widehat{\chi_K}$. Frostman measures are useful for singular sets because their spatial growth and Fourier decay can imply \eqref{eq:dual-measure-integrability}. The special choice $\mu=\chi_K\,\mathrm{d}x$ is available only when $|K|>0$ and leads to estimates governed by the weighted integrability of $\widehat{\chi_K}$. The next sections quantify these two mechanisms.
\end{remark}

\section{Frostman measures and restriction estimates}
\label{sec:frostman-restriction}

This section records the geometric and Fourier-analytic hypotheses on measures supported on a compact set $K\subset\mathbb{R}^d$. These hypotheses provide the quantitative input used later to estimate the weighted Fourier capacity $c_{p,k}(K)$.

\subsection{Frostman growth}
\label{subsec:frostman-growth}

\begin{definition}[Frostman growth]
\label{def:frostman-growth}
Let $K\subset\mathbb{R}^d$ be compact and let $0<\gamma\leq d$. A finite positive Borel measure $\mu$ supported on $K$ has Frostman growth of exponent $\gamma$ if there exists a constant $C_\mu>0$ such that
\begin{equation}
\label{eq:frostman-growth}
    \mu(B(x,r))\leq C_\mu r^\gamma,
    \qquad x\in\mathbb{R}^d,\quad r>0.
\end{equation}
Let $[\mu]_{\gamma}$ be the smallest constant such that \eqref{eq:frostman-growth} holds.
\end{definition}

The Frostman condition is the measure-theoretic form of a lower dimensional size assumption on $K$. In particular, Frostman's lemma gives such measures when the Hausdorff content of $K$ is positive at the corresponding scale; see, for instance, \cite[Chapter 8]{mattila1999geometry}.

\subsection{Fourier decay of measures}
\label{subsec:fourier-decay-measures}

\begin{definition}[Fourier decay]
\label{def:fourier-decay-measure}
Let $K\subset\mathbb{R}^d$ be compact and let $0<\beta\leq d$. A finite positive Borel measure $\mu$ supported on $K$ has Fourier decay of order $\beta$ if there exists a constant $C_\beta>0$ such that
\begin{equation}
\label{eq:fourier-decay-measure}
    |\widehat\mu(\xi)|\leq C_\beta (1+|\xi|)^{-\beta/2},
    \qquad \xi\in\mathbb{R}^d.
\end{equation}
The least admissible constant in \eqref{eq:fourier-decay-measure} is denoted by $[\mu]_{\widehat{\ },\beta}$.
\end{definition}

The estimate \eqref{eq:fourier-decay-measure} is a property of the chosen measure. When such a measure exists for some exponent $\beta$, the support has Fourier dimension at least $\beta$ in the usual sense.

\subsection{The Mockenhaupt--Mitsis--Bak--Seeger input}
\label{subsec:mmbs-input}

The following restriction estimate is the black-box harmonic-analytic input used in the sequel. We state it in the form needed below; the exponent is the standard Mockenhaupt--Mitsis--Bak--Seeger threshold. See \cite{fraser2025sharpness,mockenhaupt-sharpness} for this formulation and sharpness results.

\begin{theorem}[Mockenhaupt--Mitsis--Bak--Seeger]
\label{thm:mmbs-restriction}
Let $0<\gamma,\beta<d$, and let $\mu$ be a finite positive Borel measure on $\mathbb{R}^d$ satisfying the Frostman growth condition \eqref{eq:frostman-growth} and the Fourier decay condition \eqref{eq:fourier-decay-measure}. Define
\begin{equation}
\label{eq:mmbs-threshold}
    p_\ast(\gamma,\beta,d)
    :=
    \frac{4d-4\gamma+2\beta}{\beta}.
\end{equation}
Then, for every $p\geq p_\ast(\gamma,\beta,d)$, there exists a constant
\[
    C_{\mathrm{MMBS}}
    =
    C_{\mathrm{MMBS}}(d,p,\gamma,\beta,[\mu]_{\gamma},[\mu]_{\widehat{\ },\beta})
\]
such that
\begin{equation}
\label{eq:mmbs-extension-estimate}
    \|\widehat{f\,\mathrm d\mu}\|_{L^p(\mathbb{R}^d)}
    \leq
    C_{\mathrm{MMBS}}\|f\|_{L^2(\mu)}
\end{equation}
for every $f\in L^2(\mu)$.
\end{theorem}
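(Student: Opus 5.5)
The strategy is the Tomas--Stein $TT^\ast$ argument with a dyadic decomposition in frequency, following Mockenhaupt and Mitsis, with Bak--Seeger supplying the endpoint. By duality, \eqref{eq:mmbs-extension-estimate} is equivalent to the restriction inequality
\[
\|\widehat g\|_{L^2(\mu)}\le C_{\mathrm{MMBS}}\|g\|_{L^{p'}(\mathbb{R}^d)},
\qquad g\in\mathcal{S}(\mathbb{R}^d).
\]
Expanding $\|\widehat g\|_{L^2(\mu)}^2$ shows that this inequality follows from
\[
\|g\ast\widehat\mu\|_{L^p}\le C\|g\|_{L^{p'}}.
\]
Here $\widehat\mu$ is a bounded continuous function, so the convolution is meaningful for Schwartz $g$.

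\textbf{Step 1: dyadic splitting.} Fix a Littlewood--Paley partition of unity $1=\psi_0+\sum_{j\ge1}\psi_j$ on the frequency side, where $\psi_j$ is supported in $|\xi|\sim2^j$. This gives $\widehat\mu=\sum_j\psi_j\widehat\mu$, and we set $K_j:=\psi_j\widehat\mu$.

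\textbf{Step 2: two endpoint bounds for each piece.}
\begin{enumerate}
\item The decay hypothesis \eqref{eq:fourier-decay-measure} gives $\|K_j\|_{L^\infty}\lesssim[\mu]_{\widehat{\ },\beta}2^{-j\beta/2}$. Hence
\[
\|g\ast K_j\|_{L^\infty}\lesssim 2^{-j\beta/2}\|g\|_{L^1}.
\]
\item We have $\widehat{K_j}=\widehat{\psi_j}(-\cdot)\ast\mu$, and $\widehat{\psi_j}$ is an $L^1$-normalized bump at scale $2^{-j}$ with rapidly decaying tails. The Frostman condition \eqref{eq:frostman-growth} then gives $\|\widehat{K_j}\|_{L^\infty}\lesssim[\mu]_\gamma 2^{j(d-\gamma)}$, after summing the tails over annuli. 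By Plancherel,
\[
\|g\ast K_j\|_{L^2}\lesssim 2^{j(d-\gamma)}\|g\|_{L^2}.
\]
\end{enumerate}

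\textbf{Step 3: interpolation.} Riesz--Thorin between these two bounds gives
\[
\|g\ast K_j\|_{L^p}\lesssim 2^{j\left(-\frac{\beta}{2}(1-\frac{2}{p})+\frac{2(d-\gamma)}{p}\right)}\|g\|_{L^{p'}}.
\]
The exponent is negative exactly when $p>p_\ast=\frac{4d-4\gamma+2\beta}{\beta}$. For such $p$ the geometric series in $j$ converges, and this proves the theorem for $p>p_\ast$. The constant depends only on $d,p,\gamma,\beta,[\mu]_\gamma,[\mu]_{\widehat{\ },\beta}$, as claimed.

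\textbf{Step 4: the endpoint $p=p_\ast$ (main obstacle).} At $p=p_\ast$ the dyadic pieces are uniformly bounded but no longer summable. I would follow Bak--Seeger and replace Riesz--Thorin by a restricted weak-type interpolation of Bourgain's kind, applied to the family $\{K_j\}$. This yields restricted strong type at the endpoint, which gives the $L^2(\mu)$ estimate through the Lorentz-space refinement $L^{p',2}$. Since the paper treats the theorem as a black box from \cite{fraser2025sharpness,mockenhaupt-sharpness}, I would cite the endpoint rather than reprove it. The only point that needs checking is that the stated constant dependence matches the one in those references.
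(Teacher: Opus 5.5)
The paper does not prove this theorem at all; it imports it as a black box from the restriction literature. Your Steps 1--3 therefore go beyond the paper, and they are correct: this is the standard Mockenhaupt--Mitsis $TT^\ast$ argument. The bounds $\|K_j\|_{L^\infty}\lesssim 2^{-j\beta/2}$ and $\|\widehat{K_j}\|_{L^\infty}\lesssim[\mu]_\gamma 2^{j(d-\gamma)}$ hold, and the Riesz--Thorin exponent vanishes exactly at $p=p_\ast$. The constant also has the claimed dependence; for the $j=0$ piece, use $\mu(\mathbb{R}^d)=\widehat\mu(0)\le[\mu]_{\widehat{\ },\beta}$. What you gain over the paper is a self-contained proof for $p>p_\ast$. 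That covers every later use in Section 4 except the single exponent $p=p_\ast$.

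Your sketch of the endpoint, however, overstates what Bourgain's trick gives. Applied to $g\mapsto\sum_j g\ast K_j$, it yields only restricted weak type $L^{p_\ast',1}\to L^{p_\ast,\infty}$ for $TT^\ast$. Through $\|\widehat g\|_{L^2(\mu)}^2=\langle g\ast\widehat\mu,g\rangle$ (up to a harmless reflection), this gives $L^{p_\ast',1}\to L^2(\mu)$ for the restriction operator. By duality that is the weak-type bound $\|\widehat{f\,\mathrm d\mu}\|_{L^{p_\ast,\infty}}\lesssim\|f\|_{L^2(\mu)}$, which is strictly weaker than \eqref{eq:mmbs-extension-estimate} at $p=p_\ast$. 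Restricted strong type at a single exponent does not upgrade itself to $L^{p_\ast',2}$, or even to $L^{p_\ast'}$. The bound $L^{p_\ast',2}\to L^2(\mu)$ dualizes to $L^2(\mu)\to L^{p_\ast,2}\subset L^{p_\ast}$ (because $p_\ast>2$), and it is precisely the additional content of Bak--Seeger's theorem. At $p=p_\ast$ you should cite that Lorentz-space estimate directly, as the paper effectively does, rather than present it as a consequence of Bourgain's interpolation.
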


\begin{corollary}[Integrability of $\widehat\mu$]
\label{cor:mmbs-integrability-measure}
Under the hypotheses of Theorem \ref{thm:mmbs-restriction}, for every $p\geq p_\ast(\gamma,\beta,d)$,
\begin{equation}
\label{eq:mmbs-measure-integrability}
    \|\widehat\mu\|_{L^p(\mathbb{R}^d)}
    \leq
    C_{\mathrm{MMBS}}\mu(K)^{1/2},
\end{equation}
where $K=\operatorname{supp}\mu$ and $C_{\mathrm{MMBS}}$ is the constant from \eqref{eq:mmbs-extension-estimate}.
\end{corollary}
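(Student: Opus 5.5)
The plan is to apply Theorem \ref{thm:mmbs-restriction} with the constant function $f\equiv 1$. First I check that this choice is admissible. Since $\mu$ is a finite positive Borel measure, the constant function $1$ belongs to $L^2(\mu)$, with
\[
    \|1\|_{L^2(\mu)}=\mu(\mathbb{R}^d)^{1/2}.
\]

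Next I identify $\mu(\mathbb{R}^d)$ with $\mu(K)$. Here $K=\operatorname{supp}\mu$ is closed, and its complement has $\mu$-measure zero: the complement is a countable union of open balls that are $\mu$-null. Since $\mathbb{R}^d$ is second countable, this gives $\mu(\mathbb{R}^d)=\mu(K)$. I would also note that the Frostman and decay hypotheses are as in the definitions of Section \ref{sec:frostman-restriction}, read with $K=\operatorname{supp}\mu$, so there is no mismatch with the setting of Theorem \ref{thm:mmbs-restriction}.

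Then I observe that $\widehat{1\,\mathrm d\mu}=\widehat\mu$, understood as the function
\[
    \xi\mapsto\int e^{-2\pi i x\cdot\xi}\,\mathrm d\mu(x),
\]
which is the convention implicit in \eqref{eq:mmbs-extension-estimate}. Substituting $f\equiv1$ into \eqref{eq:mmbs-extension-estimate} gives, for every $p\geq p_\ast(\gamma,\beta,d)$,
\[
    \|\widehat\mu\|_{L^p(\mathbb{R}^d)}\leq C_{\mathrm{MMBS}}\,\mu(K)^{1/2},
\]
with the same constant as in the theorem. This is \eqref{eq:mmbs-measure-integrability}.

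There is no genuine obstacle, since the result is a direct specialization of the theorem. The only point needing care is the bookkeeping that identifies $\mu(\mathbb{R}^d)$ with $\mu(K)$ through the definition of the support.
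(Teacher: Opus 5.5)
Your proposal is correct and matches the paper's proof: specialize the MMBS extension estimate to $f\equiv 1$, using $\|1\|_{L^2(\mu)}=\mu(K)^{1/2}$ and $\widehat{1\,\mathrm d\mu}=\widehat\mu$. Your justification that $\mu(\mathbb{R}^d\setminus\operatorname{supp}\mu)=0$, via second countability, is a correct detail the paper leaves implicit.
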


\begin{proof}
Apply \eqref{eq:mmbs-extension-estimate} with $f\equiv 1$ on $K$. Since
\[
    \|1\|_{L^2(\mu)}=\mu(K)^{1/2},
\]
and $\widehat{1\,\mathrm d\mu}=\widehat\mu$, the estimate \eqref{eq:mmbs-measure-integrability} follows.
\end{proof}

\section{Capacity bounds from Frostman data}
\label{sec:capacity-bounds}

We now use the measure-theoretic input from Section \ref{sec:frostman-restriction} to produce admissible functions for the weighted Fourier capacity. The construction is based on Bessel potentials, followed by a smooth compactly supported localization. The lower estimates are those obtained by dual testing in Proposition \ref{prop:capacity-dual-lower-bound}; the present section combines that dual mechanism with upper bounds coming from Frostman--MMBS data.

\subsection{A Bessel-potential admissible majorant}
\label{subsec:bessel-majorant}

For $\alpha\geq 0$, let $G_\alpha$ denote the Bessel kernel associated with the Fourier convention \eqref{eq:fourier-convention}, normalized by
\begin{equation}
\label{eq:bessel-kernel-fourier}
    \widehat{G_\alpha}(\xi)=(1+|\xi|^2)^{-\alpha/2}.
\end{equation}
The kernel $G_\alpha$ is positive and locally integrable; see, for instance, \cite[Chapter V]{stein1970singular}. We shall use the following elementary consequence of positivity and compactness.

\begin{lemma}[Bessel-potential majorant]
\label{lem:bessel-potential-majorant}
Let $K\subset\mathbb{R}^d$ be compact, let $\mu$ be a finite positive nonzero Borel measure supported on $K$, and let $\alpha\geq 0$. Let $\rho\in C_0^\infty(\mathbb{R}^d)$ be nonnegative, with $\int_{\mathbb{R}^d}\rho(x)\,\mathrm{d}x=1$, and set
\[
    \rho_\varepsilon(x):=\varepsilon^{-d}\rho(x/\varepsilon),
    \qquad \varepsilon>0.
\]
Define
\begin{equation}
\label{eq:mollified-bessel-potential}
    w_\varepsilon:=(G_\alpha*\mu)*\rho_\varepsilon.
\end{equation}
Then $w_\varepsilon$ is smooth and strictly positive on $\mathbb{R}^d$. In particular,
\begin{equation}
\label{eq:bessel-potential-lambda}
    \lambda_\varepsilon:=\inf_{x\in K}w_\varepsilon(x)>0.
\end{equation}
If $\phi\in C_0^\infty(\mathbb{R}^d)$ satisfies $\phi=1$ on a neighbourhood of $K$, then
\begin{equation}
\label{eq:bessel-admissible-function}
    u_\varepsilon:=\lambda_\varepsilon^{-1}\phi w_\varepsilon
\end{equation}
is admissible for $K$.
\end{lemma}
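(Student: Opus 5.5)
We first dispose of the case $\alpha=0$. Here $G_0=\delta_0$, so $w_\varepsilon=\mu*\rho_\varepsilon$, and $w_\varepsilon(x)=\int\rho_\varepsilon(x-y)\,\mathrm{d}\mu(y)$. This is smooth and nonnegative, but it need not be strictly positive everywhere. It is, however, positive on the $\varepsilon$-neighbourhood of $\operatorname{supp}\mu$ as soon as $\rho>0$ near $0$. So for $\alpha=0$ we interpret the statement as requiring $\rho(0)>0$, i.e.\ a standard mollifier. Since $\mu$ may charge only part of $K$, we then need positivity on $K$ itself, which is the delicate point discussed below.

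For $\alpha>0$ the plan is as follows. $G_\alpha$ is strictly positive on $\mathbb{R}^d\setminus\{0\}$, integrable, and has exponential decay (\cite[Chapter V]{stein1970singular}).

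\textbf{Step 1: $G_\alpha*\mu$ is a positive $L^1$ function.} By Tonelli, $\|G_\alpha*\mu\|_{L^1}\le\|G_\alpha\|_{L^1}\,\mu(\mathbb{R}^d)<\infty$, so $G_\alpha*\mu\in L^1_{\mathrm{loc}}$. Moreover, for every $x$,
\[
G_\alpha*\mu(x)=\int G_\alpha(x-y)\,\mathrm{d}\mu(y)\in(0,\infty],
\]
since the integrand is positive for $\mu$-a.e.\ $y\neq x$ and $\mu\neq0$. The only exception is $\mu=c\,\delta_x$, where the value at $x$ is $+\infty$, which is still positive.

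\textbf{Step 2: smoothness and strict positivity of $w_\varepsilon$.} Since $G_\alpha*\mu\in L^1$ and $\rho_\varepsilon\in C_0^\infty$, the function $w_\varepsilon=(G_\alpha*\mu)*\rho_\varepsilon$ is $C^\infty$, with derivatives falling on $\rho_\varepsilon$. For positivity write
\[
w_\varepsilon(x)=\int\rho_\varepsilon(x-z)\,(G_\alpha*\mu)(z)\,\mathrm{d}z.
\]
The factor $\rho_\varepsilon(x-\cdot)$ is nonnegative and positive on a set of positive Lebesgue measure, because $\int\rho=1$. The factor $G_\alpha*\mu$ is strictly positive everywhere by Step 1. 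Hence the integral is strictly positive for every $x\in\mathbb{R}^d$. This step needs no assumption on where $\rho$ is positive.

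\textbf{Step 3: $\lambda_\varepsilon>0$.} $w_\varepsilon$ is continuous and strictly positive, and $K$ is compact. So the infimum over $K$ is attained and is positive. For $K=\emptyset$ we use the convention $\inf\emptyset=+\infty$, and the statement becomes vacuous.

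\textbf{Step 4: admissibility.} $u_\varepsilon=\lambda_\varepsilon^{-1}\phi w_\varepsilon$ is smooth, and it is compactly supported because $\phi\in C_0^\infty$. On $K$ we have $\phi=1$, so $u_\varepsilon=w_\varepsilon/\lambda_\varepsilon\ge1$. Thus $u_\varepsilon$ is admissible in the sense of Definition~\ref{def:weighted-fourier-capacity}.

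\textbf{Main obstacle: the case $\alpha=0$.} The hard point is strict positivity when $\alpha=0$, where the kernel gives no global positivity. The claim ``strictly positive on $\mathbb{R}^d$'' is false there in general: $\mu*\rho_\varepsilon$ vanishes far from $K$. Positivity on $K$ can also fail if $\operatorname{supp}\mu\subsetneq K$ and $\varepsilon$ is small. So for $\alpha=0$ the argument has to be restricted to $\alpha>0$, or else assume $\operatorname{supp}\mu=K$, $\rho>0$ near $0$, and then obtain $\lambda_\varepsilon>0$ from compactness of $K$. The main proof concerns $\alpha>0$, where Steps 1–4 apply directly; for $\alpha=0$ we obtain only the restricted version just described.
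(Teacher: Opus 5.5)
Your argument is correct for $\alpha>0$ and is essentially the paper's: you get strict positivity of $w_\varepsilon$ from positivity of the kernel after a Fubini rearrangement (you integrate $G_\alpha*\mu>0$ against $\rho_\varepsilon$, while the paper integrates $G_\alpha*\rho_\varepsilon$ against $\mu$), then use continuity and compactness of $K$ for $\lambda_\varepsilon>0$, and finally $\phi=1$ on $K$ for admissibility. Your caveat is also justified: the paper's proof cites only nonnegativity of $G_\alpha$ and $\rho_\varepsilon$, which does not give strict positivity, and for $\alpha=0$ (where $G_0=\delta_0$ and $w_\varepsilon=\mu*\rho_\varepsilon$ has compact support) the lemma is false as stated, although the paper only applies it with $\alpha>0$.
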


\begin{proof}
Since $G_\alpha$ and $\rho_\varepsilon$ are nonnegative and $\mu$ is a positive nonzero measure, we have
\[
    w_\varepsilon(x)
    =\int_{K}(G_\alpha*\rho_\varepsilon)(x-y)\,\mathrm{d}\mu(y)>0,
    \qquad x\in\mathbb{R}^d.
\]
The convolution with $\rho_\varepsilon$ makes $w_\varepsilon$ smooth. Therefore $w_\varepsilon$ is continuous, and since $K$ is compact the positive function $w_\varepsilon$ has a positive minimum on $K$. This proves \eqref{eq:bessel-potential-lambda}. If $x\in K$, then $\phi(x)=1$ and hence
\[
    u_\varepsilon(x)=\lambda_\varepsilon^{-1}w_\varepsilon(x)\geq 1.
\]
Thus $u_\varepsilon$ is admissible for $K$.
\end{proof}

\subsection{Upper bounds from MMBS data}
\label{subsec:upper-bounds-mmbs}

The preceding admissible function gives useful capacity estimates when the weight $k$ is dominated by the inverse multiplier of the Bessel kernel. We state the assumption in the form used below: there are constants $\alpha > 0$ and $A_k>0$ such that
\begin{equation}
\label{eq:bessel-weight-domination}
    k(\xi)\leq A_k(1+|\xi|^2)^{\alpha/2},
    \qquad \xi\in\mathbb{R}^d.
\end{equation}
Equivalently, with the normalization \eqref{eq:bessel-kernel-fourier},
\[
    k(\xi)\widehat{G_\alpha}(\xi)\leq A_k.
\]

\begin{theorem}[Upper bound from Frostman--MMBS data]
\label{thm:capacity-upper-bound-frostman-mmbs}
Let $K\subset\mathbb{R}^d$ be compact, and let $\mu$ be a finite positive nonzero Borel measure supported on $K$. Assume that $\mu$ has Frostman growth of exponent $0<\gamma<d$ and Fourier decay of order $0<\beta<d$ in the sense of Definitions \ref{def:frostman-growth} and \ref{def:fourier-decay-measure}. Let
\[
    p\geq p_\ast(\gamma,\beta,d)
\]
with $p_\ast$ as in \eqref{eq:mmbs-threshold}. Let $k$ be a tempered weight satisfying \eqref{eq:bessel-weight-domination}. Fix a nonnegative mollifier $\rho\in C_0^\infty(\mathbb{R}^d)$ with unit integral, $\varepsilon>0$, and a cutoff $\phi\in C_0^\infty(\mathbb{R}^d)$ such that $\phi=1$ on a neighbourhood of $K$. Then
\begin{equation}
\label{eq:capacity-upper-bound-frostman-mmbs}
    c_{p,k}(K)
    \leq
    \lambda_\varepsilon^{-1}
    C_{\phi,k}
    A_k
    C_{\mathrm{MMBS}}
    \mu(K)^{1/2},
\end{equation}
where
\[
    C_{\phi,k}:=\|M_k\widehat\phi\|_{L^1(\mathbb{R}^d)},
\]
$\lambda_\varepsilon$ is defined by \eqref{eq:bessel-potential-lambda}, and $C_{\mathrm{MMBS}}$ is the constant in \eqref{eq:mmbs-extension-estimate}.
\end{theorem}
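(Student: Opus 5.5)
We will use the admissible function $u_\varepsilon=\lambda_\varepsilon^{-1}\phi w_\varepsilon$ from Lemma~\ref{lem:bessel-potential-majorant} as a competitor in the definition of $c_{p,k}(K)$. The whole proof then reduces to estimating $\|u_\varepsilon\|_{p,k}$. There is one preliminary point: $u_\varepsilon$ must lie in $C_0^\infty(\mathbb{R}^d)$. This holds because $w_\varepsilon$ is smooth by the lemma and $\phi$ has compact support. Hence
\[
    c_{p,k}(K)\leq \|u_\varepsilon\|_{p,k}=\lambda_\varepsilon^{-1}\|\phi w_\varepsilon\|_{p,k}.
\]

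First, we remove the cutoff with the multiplier estimate \eqref{eq:schwartz-multiplier-estimate}, taking $\phi\in C_0^\infty\subset\mathcal S$. This gives
\[
    \|\phi w_\varepsilon\|_{p,k}\leq C_{\phi,k}\|k\widehat{w_\varepsilon}\|_{L^p}.
\]
The estimate requires $\|w_\varepsilon\|_{p,k}<\infty$, and this will follow from the next step. One should also verify that Young's inequality argument applies to $w_\varepsilon$, which is a tempered function (a bounded smooth function, since $G_\alpha\in L^1$ and $\mu$ is finite) and not a Schwartz function. This is fine: $\widehat{w_\varepsilon}$ will turn out to be an $L^p$-type function weighted by $k$, so $\widehat{\phi w_\varepsilon}=\widehat\phi*\widehat{w_\varepsilon}$ holds as a genuine convolution. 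When $p<\infty$ I would justify this by noting that $\widehat{w_\varepsilon}$ is a locally integrable function of polynomial growth at most, so the convolution with the Schwartz function $\widehat\phi$ is the distributional product formula.

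Second, we compute the Fourier transform:
\[
    \widehat{w_\varepsilon}(\xi)=(1+|\xi|^2)^{-\alpha/2}\widehat\mu(\xi)\widehat\rho(\varepsilon\xi).
\]
Here $|\widehat\rho(\varepsilon\xi)|\leq\|\rho\|_{L^1}=1$ because $\rho\geq0$ has unit integral. Combining this with the domination \eqref{eq:bessel-weight-domination} gives
\[
    |k(\xi)\widehat{w_\varepsilon}(\xi)|\leq A_k|\widehat\mu(\xi)|.
\]
Taking $L^p$ norms and applying Corollary~\ref{cor:mmbs-integrability-measure} yields
\[
    \|k\widehat{w_\varepsilon}\|_{L^p}\leq A_kC_{\mathrm{MMBS}}\mu(K)^{1/2}.
\]
This bound is valid for $p\geq p_\ast(\gamma,\beta,d)$, and it is finite, which also settles the finiteness requirement in the first step. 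Chaining the three inequalities gives \eqref{eq:capacity-upper-bound-frostman-mmbs}.

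The only delicate point is the rigorous use of \eqref{eq:schwartz-multiplier-estimate} for the non-compactly-supported $w_\varepsilon$. This means identifying $\widehat{\phi w_\varepsilon}$ with the convolution $\widehat\phi*\widehat{w_\varepsilon}$ and applying Minkowski's integral inequality via $k(\xi)\leq M_k(\xi-\eta)k(\eta)$. If needed, I would first use that $\widehat{w_\varepsilon}\in L^\infty$ (it is bounded by $\mu(K)$), so the convolution is absolutely convergent, and then apply Young's inequality to $\bigl(M_k|\widehat\phi|\bigr)*\bigl(k|\widehat{w_\varepsilon}|\bigr)$. Everything else is bookkeeping of constants.
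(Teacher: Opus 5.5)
Your proposal is correct and follows the paper's proof step for step. The steps are: admissibility of $u_\varepsilon=\lambda_\varepsilon^{-1}\phi w_\varepsilon$ from Lemma~\ref{lem:bessel-potential-majorant}, removal of the cutoff via \eqref{eq:schwartz-multiplier-estimate}, the pointwise bound $|k\widehat{w_\varepsilon}|\leq A_k|\widehat\mu|$ from $|\widehat{\rho_\varepsilon}|\leq 1$ and \eqref{eq:bessel-weight-domination}, and finally Corollary~\ref{cor:mmbs-integrability-measure}. Your extra justification that the multiplier estimate applies to the non-Schwartz function $w_\varepsilon$ is a point the paper leaves implicit, and it is sound: since $\widehat{w_\varepsilon}\in L^\infty$, the identity $\widehat{\phi w_\varepsilon}=\widehat\phi*\widehat{w_\varepsilon}$ is an absolutely convergent convolution.
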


\begin{proof}
Let $w_\varepsilon$ and $u_\varepsilon$ be defined by \eqref{eq:mollified-bessel-potential} and \eqref{eq:bessel-admissible-function}. By Lemma \ref{lem:bessel-potential-majorant}, $u_\varepsilon$ is admissible for $K$. Therefore
\[
    c_{p,k}(K)\leq \|u_\varepsilon\|_{p,k}.
\]
Using the multiplier estimate \eqref{eq:schwartz-multiplier-estimate}, we obtain
\[
    \|u_\varepsilon\|_{p,k}
    \leq
    \lambda_\varepsilon^{-1}C_{\phi,k}\|w_\varepsilon\|_{p,k}.
\]
On the Fourier side,
\[
    \widehat{w_\varepsilon}(\xi)
    =\widehat{G_\alpha}(\xi)\widehat\mu(\xi)\widehat{\rho_\varepsilon}(\xi).
\]
Since $\rho_\varepsilon$ is nonnegative with unit integral, $|\widehat{\rho_\varepsilon}(\xi)|\leq1$. Combining this with \eqref{eq:bessel-weight-domination} gives
\begin{equation}
\label{eq:bessel-potential-fourier-bound}
    |k(\xi)\widehat{w_\varepsilon}(\xi)|
    \leq
    A_k|\widehat\mu(\xi)|.
\end{equation}
Hence
\[
    \|w_\varepsilon\|_{p,k}
    \leq
    A_k\|\widehat\mu\|_{L^p(\mathbb{R}^d)}.
\]
Finally, Corollary \ref{cor:mmbs-integrability-measure} gives
\[
    \|\widehat\mu\|_{L^p(\mathbb{R}^d)}
    \leq
    C_{\mathrm{MMBS}}\,\mu(K)^{1/2}.
\]
Combining the estimates proves \eqref{eq:capacity-upper-bound-frostman-mmbs}.
\end{proof}

\subsection{Bilateral capacity estimates}
\label{subsec:bilateral-capacity-estimates}

The upper estimate above and the dual lower estimate from Proposition \ref{prop:capacity-dual-lower-bound} may be combined. The two sides need not use the same measure: the upper bound uses a measure with Frostman and Fourier-decay data, while the lower bound uses any positive measure satisfying the weighted dual integrability condition.

\begin{proposition}[Bilateral capacity estimate]
\label{prop:bilateral-capacity-estimate}
Let $K\subset\mathbb{R}^d$ be compact, let $1\leq p\leq\infty$, and let $k$ be a tempered weight. Let $\nu$ be a finite positive Borel measure supported on $K$ such that
\[
    k^{-1}(\cdot)\widehat\nu(-\cdot)\in L^{p'}(\mathbb{R}^d).
\]
Assume, in addition, that there is a measure $\mu$ supported on $K$ satisfying the hypotheses of Theorem \ref{thm:capacity-upper-bound-frostman-mmbs}. Then
\begin{equation}
\label{eq:bilateral-capacity-estimate}
    \frac{\nu(K)}
    {\|k^{-1}(\cdot)\widehat\nu(-\cdot)\|_{L^{p'}(\mathbb{R}^d)}}
    \leq
    c_{p,k}(K)
    \leq
    \lambda_\varepsilon^{-1}
    C_{\phi,k}
    A_k
    C_{\mathrm{MMBS}}\, 
    \mu(K)^{1/2},
\end{equation}
for any $p\geq p_\ast(\gamma,\beta,d)$, with $p_\ast$ as in \eqref{eq:mmbs-threshold}.The constants and auxiliary choices on the right-hand side are those of Theorem \ref{thm:capacity-upper-bound-frostman-mmbs}.
\end{proposition}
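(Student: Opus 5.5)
The bilateral estimate \eqref{eq:bilateral-capacity-estimate} will be proved by concatenating the two one-sided mechanisms already established, with no new analysis.

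For the left inequality, apply Proposition \ref{prop:capacity-dual-lower-bound} with the testing measure $\mu$ there replaced by $\nu$. The hypotheses match exactly: $\nu$ is a finite positive Borel measure supported on $K$, and the weighted dual integrability condition \eqref{eq:dual-measure-integrability} for $\nu$ is assumed. This gives the lower bound for every $1\leq p\leq\infty$. In particular it holds for the restricted range $p\geq p_\ast(\gamma,\beta,d)$, with $p'$ the conjugate of that same $p$.

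For the right inequality, invoke Theorem \ref{thm:capacity-upper-bound-frostman-mmbs} with the measure $\mu$ and with the same auxiliary choices $\rho$, $\varepsilon$, $\phi$ and $\alpha$, $A_k$ from \eqref{eq:bessel-weight-domination}. These produce $\lambda_\varepsilon$, $C_{\phi,k}$ and $C_{\mathrm{MMBS}}$. Since $p\geq p_\ast(\gamma,\beta,d)$, the theorem applies directly and yields the stated upper bound.

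The conclusion follows by chaining the two inequalities through $c_{p,k}(K)$. The only point needing care, and the mild obstacle, is bookkeeping: $p$ must be the same exponent on both sides, so the lower bound is read with $p'$ conjugate to the chosen $p\geq p_\ast$. The constants on the right are those fixed in the theorem, and $\mu$ and $\nu$ are allowed to differ. One might also remark that the lower bound is finite and positive whenever $\nu(K)>0$. This is consistent with finiteness of the right-hand side, because $\lambda_\varepsilon>0$ by Lemma \ref{lem:bessel-potential-majorant}.
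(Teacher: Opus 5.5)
Your proposal is correct and is essentially the paper's proof. The left inequality is Proposition~\ref{prop:capacity-dual-lower-bound} applied to $\nu$, and the right inequality is Theorem~\ref{thm:capacity-upper-bound-frostman-mmbs} applied to $\mu$, with the same exponent $p\geq p_\ast(\gamma,\beta,d)$ used on both sides.
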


\begin{proof}
The left-hand inequality is Proposition \ref{prop:capacity-dual-lower-bound} applied to $\nu$. The right-hand inequality is Theorem \ref{thm:capacity-upper-bound-frostman-mmbs} applied to $\mu$.
\end{proof}

As a direct consequence of Proposition \ref{prop:capacity-dual-lower-bound}, the lower estimate may be optimized over all testing measures supported on $K$.

\begin{corollary}[Optimized dual lower bound]
\label{cor:optimized-dual-lower-bound}
Let $K\subset\mathbb{R}^d$ be compact, let $1\leq p\leq\infty$, and let $k$ be a tempered weight. Let
\[
    \mathcal{M}_{+,k}^{p'}(K)
    :=
    \left\{
    \nu\in\mathcal{M}_+(K):
    k^{-1}(\cdot)\widehat\nu(-\cdot)
    \in L^{p'}(\mathbb{R}^d)
    \right\},
\]
where $\mathcal{M}_+(K)$ denotes the finite positive Borel measures supported on $K$. Then
\begin{equation}
\label{eq:optimized-dual-lower-bound}
    c_{p,k}(K)
    \geq
    \sup_{\nu\in\mathcal{M}_{+,k}^{p'}(K)}
    \frac{\nu(K)}
    {\|k^{-1}(\cdot)\widehat\nu(-\cdot)\|_{L^{p'}(\mathbb{R}^d)}}.
\end{equation}
Equivalently, if
\[
    \mathcal{P}_{k}^{p'}(K)
    :=
    \left\{
    \nu\in\mathcal{M}_{+,k}^{p'}(K):\nu(K)=1
    \right\},
\]
then
\begin{equation}
\label{eq:normalized-dual-optimization}
    c_{p,k}(K)
    \geq
    \left(
    \inf_{\nu\in\mathcal{P}_{k}^{p'}(K)}
    \|k^{-1}(\cdot)\widehat\nu(-\cdot)\|_{L^{p'}(\mathbb{R}^d)}
    \right)^{-1},
\end{equation}
with the convention that $(\inf\emptyset)^{-1}=0$.
\end{corollary}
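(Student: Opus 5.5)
The plan is to apply Proposition~\ref{prop:capacity-dual-lower-bound} to each testing measure separately and then take the supremum. Fix $\nu\in\mathcal{M}_{+,k}^{p'}(K)$. Since $\nu$ is a finite positive Borel measure supported on $K$ and satisfies \eqref{eq:dual-measure-integrability}, Proposition~\ref{prop:capacity-dual-lower-bound} gives
\[
    c_{p,k}(K)\geq \frac{\nu(K)}{\|k^{-1}(\cdot)\widehat\nu(-\cdot)\|_{L^{p'}(\mathbb{R}^d)}}.
\]
The left-hand side does not depend on $\nu$, so taking the supremum over $\mathcal{M}_{+,k}^{p'}(K)$ yields \eqref{eq:optimized-dual-lower-bound}.

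Two degenerate cases need a convention. If $\nu=0$, the quotient reads $0/0$. I would either exclude $\nu=0$ or interpret this quotient as $0$, which is harmless because $c_{p,k}(K)\ge 0$. If the class is empty, the supremum of the empty set is read as $0$.

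For the normalized form \eqref{eq:normalized-dual-optimization}, the key observation is that the quotient is invariant under positive scaling. If $\nu\neq0$ belongs to the class, then $\tilde\nu:=\nu/\nu(K)$ lies in $\mathcal{P}_k^{p'}(K)$, since $\widehat{\tilde\nu}=\widehat\nu/\nu(K)$. Moreover,
\[
    \frac{\nu(K)}{\|k^{-1}\widehat\nu(-\cdot)\|_{L^{p'}}}
    =\frac{1}{\|k^{-1}\widehat{\tilde\nu}(-\cdot)\|_{L^{p'}}}.
\]
Conversely, every element of $\mathcal{P}_k^{p'}(K)$ is a nonzero element of $\mathcal{M}_{+,k}^{p'}(K)$. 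Hence the supremum in \eqref{eq:optimized-dual-lower-bound} equals $\sup_{\nu\in\mathcal{P}}\|k^{-1}\widehat\nu(-\cdot)\|_{L^{p'}}^{-1}$, and this equals $(\inf_{\nu\in\mathcal{P}}\|\cdot\|)^{-1}$ by the monotonicity of $t\mapsto 1/t$ on $(0,\infty]$.

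The only point needing care is that the denominator is strictly positive for every $\nu\in\mathcal{P}$. This holds because $\widehat\nu$ is continuous with $\widehat\nu(0)=\nu(K)=1$, and $k^{-1}>0$, so the $L^{p'}$ norm is nonzero. This rules out $1/0$ issues. The empty-class convention $(\inf\emptyset)^{-1}=0$ matches the case where the supremum is over an empty set, so the two formulations agree in all cases. I expect no genuine obstacle; the main task is bookkeeping the conventions for $\nu=0$, for an infinite denominator, and for an empty class.
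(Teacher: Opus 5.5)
Your proposal is correct and matches the paper's proof: apply Proposition~\ref{prop:capacity-dual-lower-bound} to each $\nu$ and take the supremum, then get the normalized form from the scaling $\nu\mapsto\nu/\nu(K)$. Your extra bookkeeping (the $0/0$ quotient at $\nu=0$, positivity of the denominator on $\mathcal{P}_{k}^{p'}(K)$ from $\widehat\nu(0)=1$ and continuity, and the converse inclusion that makes the two right-hand sides equal) fills in details the paper leaves implicit; one nitpick is that $\mathcal{M}_{+,k}^{p'}(K)$ always contains the zero measure, so the empty-set convention is only needed for $\mathcal{P}_{k}^{p'}(K)$, or after excluding $\nu=0$.
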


\begin{proof}
The estimate \eqref{eq:optimized-dual-lower-bound} follows from Proposition \ref{prop:capacity-dual-lower-bound} by taking the supremum over all measures in $\mathcal{M}_{+,k}^{p'}(K)$. For the normalized form, let $\nu\in\mathcal{M}_{+,k}^{p'}(K)$ be nonzero and set $\widetilde\nu:=\nu/\nu(K)$. Then $\widetilde\nu\in\mathcal{P}_{k}^{p'}(K)$ and
\[
    \|k^{-1}(\cdot)\widehat\nu(-\cdot)\|_{L^{p'}(\mathbb{R}^d)}
    =
    \nu(K)
    \|k^{-1}(\cdot)\widehat{\widetilde\nu}(-\cdot)\|_{L^{p'}(\mathbb{R}^d)}.
\]
Thus
\[
    \frac{\nu(K)}
    {\|k^{-1}(\cdot)\widehat\nu(-\cdot)\|_{L^{p'}(\mathbb{R}^d)}}
    =
    \|k^{-1}(\cdot)\widehat{\widetilde\nu}(-\cdot)\|_{L^{p'}(\mathbb{R}^d)}^{-1}.
\]
Taking the supremum over all nonzero measures in $\mathcal{M}_{+,k}^{p'}(K)$ gives \eqref{eq:normalized-dual-optimization}.
\end{proof}

\section{Weighted Lebedev criterion and examples}
\label{sec:lebedev-examples}

The lower bounds in Section \ref{subsec:testing-against-measures} may be tested with any positive measure supported on the compact set. In the present section we use the distinguished testing measure
\[
    \mathrm{d}\nu=\chi_K\,\mathrm{d}x,
\]
when $K$ has positive Lebesgue measure. In this case the dual condition becomes the weighted integrability of $\widehat{\chi_K}$. Lebedev-type estimates provide this integrability from the size of the boundary of $K$.

\subsection{Lebedev integrability for characteristic functions}
\label{subsec:lebedev-integrability}

For $1\leq q\leq\infty$, write
\begin{equation}\label{eq:ap-space-definition}
    A_q(\mathbb{R}^d)
    :=
    \{f\in\mathcal{S}'(\mathbb{R}^d):\widehat f\in L^q(\mathbb{R}^d)\}.
\end{equation}
The following form of Lebedev's argument is the one used below.

\begin{lemma}[Lebedev-type integrability]
\label{lem:lebedev-integrability}
Let $E\subset\mathbb{R}^d$ be a bounded measurable set with positive Lebesgue measure. Let
\[
    \alpha=\overline{\dim}_{M}(\partial E)<d
\]
be the upper Minkowski dimension of its boundary. If $0\leq\alpha<d$, then
\begin{equation}\label{eq:lebedev-threshold}
    \chi_E\in A_q(\mathbb{R}^d)
    \qquad\text{for every}\qquad
    q>\frac{2d}{2d-\alpha},
    \quad 1\leq q<2.
\end{equation}
Equivalently, $\widehat{\chi_E}\in L^q(\mathbb{R}^d)$ for every $q$ in the range above.
\end{lemma}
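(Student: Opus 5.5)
We follow Lebedev's classical scheme: an $L^2$ modulus-of-continuity bound for $\chi_E$, converted into $L^2$ bounds for $\widehat{\chi_E}$ on dyadic annuli, and then Hölder's inequality on each annulus. Since $E$ is bounded, $\chi_E\in L^1\cap L^2$, so $\widehat{\chi_E}$ is bounded and lies in $L^2$. Integrability on $\{|\xi|\le 1\}$ is therefore automatic for every $q\geq1$, and only the behaviour at high frequencies needs attention.

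\emph{Step 1 (modulus of continuity).} Fix $\varepsilon>0$. For $h\in\mathbb{R}^d$ with $|h|=\delta$ small, the difference $\chi_E(x+h)-\chi_E(x)$ can be nonzero only when the segment $[x,x+h]$ meets both $E$ and its complement. That segment is connected, so it must then meet $\partial E$, and hence $x\in(\partial E)_\delta$. By the definition of upper Minkowski dimension this gives
\[
\|\chi_E(\cdot+h)-\chi_E\|_{L^2}^2\le |(\partial E)_\delta|\le C_\varepsilon\,\delta^{d-\alpha-\varepsilon}.
\]

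\emph{Step 2 (annular $L^2$ bounds).} By Plancherel,
\[
\int|\widehat{\chi_E}(\xi)|^2\,|e^{2\pi i h\cdot\xi}-1|^2\,d\xi=\|\chi_E(\cdot+h)-\chi_E\|_2^2.
\]
We average this identity over $h$ in the sphere $|h|=\delta=2^{-j}$. On the annulus $A_j=\{2^{j}\le|\xi|<2^{j+1}\}$ the spherical average of $|e^{2\pi i h\cdot\xi}-1|^2$ is bounded below by a positive constant $c_d$. This is the step requiring care, because a single direction $h$ degenerates when $h\perp\xi$; averaging over directions is what removes this degeneracy. The result is
\[
\int_{A_j}|\widehat{\chi_E}|^2\,d\xi\le C\,2^{-j(d-\alpha-\varepsilon)}.
\]

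\emph{Step 3 (Hölder and summation).} Fix $1\le q<2$ and apply Hölder's inequality on $A_j$, whose measure is comparable to $2^{jd}$:
\[
\int_{A_j}|\widehat{\chi_E}|^q\le |A_j|^{1-q/2}\Big(\int_{A_j}|\widehat{\chi_E}|^2\Big)^{q/2}\lesssim 2^{j[d(1-q/2)-(q/2)(d-\alpha-\varepsilon)]}.
\]
The exponent is negative exactly when $d-\tfrac{q}{2}(2d-\alpha-\varepsilon)<0$, that is, when $q>2d/(2d-\alpha-\varepsilon)$. Given $q>2d/(2d-\alpha)$, we choose $\varepsilon$ small enough that this inequality still holds. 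The series over $j\ge0$ is then geometric and converges, and together with the low-frequency part this gives $\widehat{\chi_E}\in L^q$.

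I expect the main obstacle to be the averaging in Step 2, which must produce a lower bound uniform over each whole annulus; everything after it is routine bookkeeping. The hypothesis $|E|>0$ plays no role in the argument, and the restriction $\alpha<d$ serves only to make the admissible range of $q$ nonempty.
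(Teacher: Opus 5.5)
Your argument is correct in substance and reaches the same threshold as the paper, but it organizes the ingredients differently. The paper works globally. It quotes the difference characterization of $W^{s,2}$ and bounds $\int|E\Delta(E-h)|\,|h|^{-d-2s}\,\mathrm{d}h$ with the tubular estimate for $\partial E$, which gives $(1+|\xi|^2)^{s/2}\widehat{\chi_E}\in L^2$ for all $s<(d-\alpha)/2$. It then applies a single H\"older inequality against $(1+|\xi|^2)^{-sq/(2-q)}$ and lets $s\uparrow(d-\alpha)/2$.

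You instead keep $|h|$ fixed, pass to dyadic annuli, and apply H\"older annulus by annulus. The two are closely linked. The Fourier-side proof of the characterization the paper cites is your Plancherel identity integrated against $|h|^{-d-2s}\,\mathrm{d}h$ over all of $\mathbb{R}^d$. There, rotation and scaling give $\int|e^{2\pi ih\cdot\xi}-1|^2|h|^{-d-2s}\,\mathrm{d}h=c_{d,s}|\xi|^{2s}$ exactly. Integrating over all $h$, rather than over one sphere, is why no directional degeneracy arises in the paper's route.

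Your route is self-contained and gives a slightly finer intermediate bound, $\int_{A_j}|\widehat{\chi_E}|^2\lesssim2^{-j(d-\alpha-\varepsilon)}$. The weighted statement of Proposition~\ref{prop:weighted-lebedev-lower-bound} then follows at once by inserting a factor $2^{-j\sigma p'}$ on each annulus. The paper's route is shorter and produces exactly the Sobolev estimate it reuses in that proposition.

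The step you flagged does need a small repair. With $\delta=2^{-j}$ the claimed lower bound fails in $d=1$. There the ``sphere'' is $\{\pm\delta\}$, the average is $4\sin^2(\pi\delta\xi)$, and this vanishes at $|\xi|=2^j\in A_j$. Taking $\delta=2^{-j-2}$, so that $\delta|\xi|\in[1/4,1/2)$ on $A_j$, fixes this.

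For $d\ge2$ you should also justify $c_d>0$. The average equals $2(1-\widehat\sigma(\delta\xi))$, with $\sigma$ the normalized surface measure. Moreover $\widehat\sigma(\eta)<1$ for $\eta\ne0$, since $\{\omega:\omega\cdot\eta\in\mathbb{Z}\}$ is $\sigma$-null. Compactness of the shell then gives a uniform constant. Alternatively, with the same $\delta$, use only the directions $h=\delta e_i$: on $A_j$ some $|\xi_i|\ge|\xi|/\sqrt d$, so $\max_i\sin^2(\pi\delta\xi_i)\ge\sin^2(\pi/(4\sqrt d))$.

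Finally, Step~1 actually gives $\operatorname{dist}(x,\partial E)\le\delta$. With the paper's strict-inequality convention this means $x\in(\partial E)_{2\delta}$, which only changes constants.
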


\begin{proof}
We give the argument in the form needed later. Since $E$ is bounded, $\chi_E\in L^2(\mathbb{R}^d)$. For $0<s<1$, the fractional Sobolev seminorm of $\chi_E$ is controlled by
\[
    \int_{\mathbb{R}^d}
    \frac{|E\Delta(E-h)|}{|h|^{d+2s}}\,\mathrm{d}h,
\]
where $E\Delta(E-h)$ denotes the symmetric difference. This is the standard difference characterization of $W^{s,2}(\mathbb{R}^d)$; see, for instance, \cite[Chapter 5, Section 5]{stein1970singular}.

For small $|h|$, the symmetric difference is contained in a constant multiple of the $|h|$-neighbourhood of $\partial E$. By the definition of $\overline{\dim}_{M}$, for every $\varepsilon>0$ there are constants $C_\varepsilon>0$ and $r_\varepsilon>0$ such that
\begin{equation}\label{eq:boundary-tubular-estimate}
    |\{x\in\mathbb{R}^d:\operatorname{dist}(x,\partial E)<r\}|
    \leq
    C_\varepsilon r^{d-\alpha-\varepsilon},
    \qquad 0<r<r_\varepsilon.
\end{equation}
This is the usual tubular-neighbourhood formulation of upper Minkowski dimension; see \cite[Section 3.2]{falconer2013fractal}. Hence the contribution near the origin is bounded by a constant multiple of
\[
    \int_{|h|<r_\varepsilon}
    |h|^{-d-2s}|h|^{d-\alpha-\varepsilon}\,\mathrm{d}h,
\]
which is finite whenever
\[
    2s<d-\alpha-\varepsilon.
\]
Since $\varepsilon>0$ is arbitrary, we obtain
\[
    \chi_E\in W^{s,2}(\mathbb{R}^d)
    \qquad\text{for every}\qquad
    s<\frac{d-\alpha}{2}.
\]
Equivalently,
\[
    (1+|\xi|^2)^{s/2}\widehat{\chi_E}(\xi)
    \in L^2(\mathbb{R}^d).
\]

Let $1\leq q<2$. H\"older's inequality gives
\[
\begin{aligned}
    \int_{\mathbb{R}^d}|\widehat{\chi_E}(\xi)|^q\,\mathrm{d}\xi
    &\leq
    \left(
    \int_{\mathbb{R}^d}
    |\widehat{\chi_E}(\xi)|^2(1+|\xi|^2)^s\,\mathrm{d}\xi
    \right)^{q/2}  \\
    &\quad\times
    \left(
    \int_{\mathbb{R}^d}
    (1+|\xi|^2)^{-\frac{sq}{2-q}}\,\mathrm{d}\xi
    \right)^{(2-q)/2}.
\end{aligned}
\]
The second factor is finite if and only if
\[
    \frac{2sq}{2-q}>d.
\]
Thus $\widehat{\chi_E}\in L^q(\mathbb{R}^d)$ whenever
\[
    q>\frac{2d}{d+2s}.
\]
Letting $s\uparrow(d-\alpha)/2$ gives \eqref{eq:lebedev-threshold}. This is the Lebedev mechanism in the form used here; compare \cite{lebedev2013fourier}.
\end{proof}

\subsection{A weighted Lebedev lower bound}
\label{subsec:weighted-lebedev-lower-bound}

The previous lemma can be combined with a weight that grows at infinity. This produces the weighted integrability required by the dual lower bound of Proposition \ref{prop:capacity-dual-lower-bound}.

\begin{proposition}[Weighted Lebedev lower bound]
\label{prop:weighted-lebedev-lower-bound}
Let $K\subset\mathbb{R}^d$ be a compact set with positive Lebesgue measure, and let
\[
    \alpha=\overline{\dim}_{M}(\partial K)<d.
\]
Assume that the tempered weight $k$ satisfies
\begin{equation}\label{eq:weighted-lebedev-weight-condition}
    \frac{1}{k(\xi)}
    \leq
    C_k(1+|\xi|^2)^{-\sigma/2},
    \qquad \xi\in\mathbb{R}^d,
\end{equation}
for some $\sigma\in\mathbb{R}$. If
\begin{equation}\label{eq:weighted-lebedev-nonempty-range}
    d-\alpha+2\sigma>0,
\end{equation}
then, for every $1\leq p'<2$ satisfying
\begin{equation}\label{eq:weighted-lebedev-threshold}
    p'>
    \frac{2d}{2d-\alpha+2\sigma},
\end{equation}
one has
\begin{equation}\label{eq:weighted-lebedev-integrability}
    k^{-1}\widehat{\chi_K}\in L^{p'}(\mathbb{R}^d).
\end{equation}
Consequently, for the conjugate exponent $p$,
\begin{equation}\label{eq:weighted-lebedev-capacity-lower-bound}
    c_{p,k}(K)
    \geq
    \frac{|K|}
    {\|k^{-1}\widehat{\chi_K}(-\cdot)\|_{L^{p'}(\mathbb{R}^d)}}
    >0.
\end{equation}
\end{proposition}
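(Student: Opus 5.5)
The plan is to repeat the proof of Lemma \ref{lem:lebedev-integrability} with the extra decay supplied by the weight inserted into the Hölder step. I would then feed the resulting integrability into Corollary \ref{cor:indicator-lower-bound}.

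\emph{Sobolev regularity.} Since $\partial K$ is a compact set with $\overline{\dim}_M(\partial K)=\alpha<d$, the difference-quotient argument in the proof of Lemma \ref{lem:lebedev-integrability} applies verbatim to $E=K$. It gives
\[
(1+|\xi|^2)^{s/2}\widehat{\chi_K}\in L^2(\mathbb{R}^d)\qquad\text{for every } 0<s<\tfrac{d-\alpha}{2}.
\]
The case $\alpha=d$ is excluded by hypothesis. When $\alpha<d$, the admissible range $0<s<(d-\alpha)/2$ is non-empty.

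\emph{Weighted Hölder.} Fix $1\le p'<2$ satisfying \eqref{eq:weighted-lebedev-threshold}. Using \eqref{eq:weighted-lebedev-weight-condition}, I would bound
\[
\int |k^{-1}\widehat{\chi_K}|^{p'}\,\mathrm d\xi \le C_k^{p'}\int |\widehat{\chi_K}|^{p'}(1+|\xi|^2)^{sp'/2}(1+|\xi|^2)^{-(s+\sigma)p'/2}\,\mathrm d\xi.
\]
Hölder's inequality with exponents $2/p'$ and $2/(2-p')$ then bounds the right-hand side by
\[
\|(1+|\cdot|^2)^{s/2}\widehat{\chi_K}\|_{L^2}^{p'}\Bigl(\int (1+|\xi|^2)^{-\frac{(s+\sigma)p'}{2-p'}}\,\mathrm d\xi\Bigr)^{(2-p')/2}.
\]
The last integral is finite if and only if $2(s+\sigma)p'/(2-p')>d$. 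This condition is equivalent to $p'\bigl(2d+2(s+\sigma)\bigr)... $ more precisely to $p'\bigl(d+2(s+\sigma)\bigr)>2d$, that is, $p'>2d/(d+2s+2\sigma)$, provided $d+2s+2\sigma>0$. Letting $s\uparrow(d-\alpha)/2$ moves this threshold down to $2d/(2d-\alpha+2\sigma)$. Hypothesis \eqref{eq:weighted-lebedev-nonempty-range} ensures the denominator is positive. Hence, for any $p'$ strictly above the threshold, one can choose $s<(d-\alpha)/2$ close enough to $(d-\alpha)/2$ that $d+2s+2\sigma>0$ and the integral converges. This proves \eqref{eq:weighted-lebedev-integrability}.

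\emph{Conclusion.} Replacing $\xi$ by $-\xi$ does not change $L^{p'}$ norms once the weight is handled; note that the bound \eqref{eq:weighted-lebedev-weight-condition} is radial. So the same estimate applied to $k^{-1}(\cdot)\widehat{\chi_K}(-\cdot)$ gives \eqref{eq:indicator-integrability-condition}. Corollary \ref{cor:indicator-lower-bound} then yields \eqref{eq:weighted-lebedev-capacity-lower-bound}. The bound is strictly positive because $|K|>0$ and the denominator is finite.

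\emph{Main obstacle.} The only delicate point is the sign bookkeeping when $\sigma<0$. In that case I must check that $d+2s+2\sigma>0$ is achievable with $s<(d-\alpha)/2$, and that is exactly what \eqref{eq:weighted-lebedev-nonempty-range} provides. A secondary point is $p'=1$, where Hölder is applied with $2/(2-p')=2$. This is still covered by the same computation.
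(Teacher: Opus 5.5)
Your proposal is correct and follows the paper's proof essentially step for step: the Sobolev regularity $(1+|\xi|^2)^{s/2}\widehat{\chi_K}\in L^2$ for $s<(d-\alpha)/2$ taken from the Lebedev lemma, H\"older with exponents $2/p'$ and $2/(2-p')$ after inserting the weight bound, letting $s\uparrow(d-\alpha)/2$, and then the dual lower bound with $\mathrm{d}\nu=\chi_K\,\mathrm{d}x$. You treat the reflection $\widehat{\chi_K}(-\cdot)$ explicitly, which the paper passes over silently, and it is even simpler than you make it: $|\widehat{\chi_K}(-\xi)|=|\widehat{\chi_K}(\xi)|$ because $\chi_K$ is real. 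The garbled intermediate line in your exponent computation should be deleted, since the condition you end with, $p'(d+2s+2\sigma)>2d$, is the right one.
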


\begin{proof}
Fix
\[
    s_0<\frac{d-\alpha}{2}.
\]
By Lemma \ref{lem:lebedev-integrability}, or more precisely by the Sobolev estimate proved there,
\begin{equation}\label{eq:chi-sobolev-regularity}
    \int_{\mathbb{R}^d}
    |\widehat{\chi_K}(\xi)|^2(1+|\xi|^2)^{s_0}
    \,\mathrm{d}\xi
    <\infty.
\end{equation}
Using \eqref{eq:weighted-lebedev-weight-condition}, we estimate
\[
\begin{aligned}
    \int_{\mathbb{R}^d}
    |k(\xi)^{-1}\widehat{\chi_K}(\xi)|^{p'}\,\mathrm{d}\xi
    &\leq
    C_k^{p'}
    \int_{\mathbb{R}^d}
    |\widehat{\chi_K}(\xi)|^{p'}
    (1+|\xi|^2)^{-\sigma p'/2}
    \,\mathrm{d}\xi .
\end{aligned}
\]
Rewrite the integrand as
\[
    |\widehat{\chi_K}(\xi)|^{p'}
    (1+|\xi|^2)^{-\sigma p'/2}
    =
    \left(
    |\widehat{\chi_K}(\xi)|^{p'}(1+|\xi|^2)^{s_0p'/2}
    \right)
    (1+|\xi|^2)^{-(s_0+
    \sigma)p'/2}.
\]
For $1\leq p'<2$, H\"older's inequality with exponents
\[
    \frac{2}{p'}
    \qquad\text{and}\qquad
    \frac{2}{2-p'}
\]
gives
\[
\begin{aligned}
    &\int_{\mathbb{R}^d}
    |\widehat{\chi_K}(\xi)|^{p'}
    (1+|\xi|^2)^{-\sigma p'/2}
    \,\mathrm{d}\xi \\
    &\leq
    \left(
    \int_{\mathbb{R}^d}
    |\widehat{\chi_K}(\xi)|^2(1+|\xi|^2)^{s_0}
    \,\mathrm{d}\xi
    \right)^{p'/2}
    \left(
    \int_{\mathbb{R}^d}
    (1+|\xi|^2)^{-\frac{(s_0+\sigma)p'}{2-p'}}
    \,\mathrm{d}\xi
    \right)^{(2-p')/2}.
\end{aligned}
\]
The first factor is finite by \eqref{eq:chi-sobolev-regularity}. The second factor is finite precisely when
\[
    \frac{2(s_0+\sigma)p'}{2-p'}>d,
\]
or equivalently
\[
    p'>\frac{2d}{d+2s_0+2\sigma}.
\]
Letting $s_0\uparrow(d-
\alpha)/2$ gives \eqref{eq:weighted-lebedev-integrability} for every $p'$ satisfying \eqref{eq:weighted-lebedev-threshold}. The condition \eqref{eq:weighted-lebedev-nonempty-range} is exactly the condition that the threshold in \eqref{eq:weighted-lebedev-threshold} is strictly smaller than $2$, so that a nonempty range with $p'<2$ remains.

Finally apply Proposition \ref{prop:capacity-dual-lower-bound} with the positive measure
\[
    \mathrm{d}\nu=\chi_K\,\mathrm{d}x.
\]
Since $\nu(K)=|K|$ and $\widehat\nu=\widehat{\chi_K}$, \eqref{eq:weighted-lebedev-capacity-lower-bound} follows.
\end{proof}

\begin{remark}\label{rem:weighted-lebedev-effective-boundary}
The threshold in \eqref{eq:weighted-lebedev-threshold} behaves as if the boundary exponent $\alpha$ were replaced by $\alpha-2\sigma$. Positive values of $\sigma$ correspond to weights growing at infinity and improve the integrability of $k^{-1}\widehat{\chi_K}$. Negative values of $\sigma$ correspond to weights decaying at infinity and reduce the available range of exponents.
\end{remark}

\subsection{Ahlfors-David examples}
\label{subsec:ad-regular-examples}

The upper estimate of Section \ref{subsec:upper-bounds-mmbs} applies directly when a natural Hausdorff measure on $K$ has both Frostman growth and Fourier decay.

\begin{example}[Ahlfors-David upper bound]
\label{cor:ad-regular-upper-bound}
Let $K\subset\mathbb{R}^d$ be compact, and let $0<s<d$. Assume that
\[
    \mu:=\mathcal{H}^s\lfloor K
\]
is a finite positive nonzero measure and satisfies the Frostman growth condition
\[
    \mu(B(x,r))\leq C_\mu r^s,
    \qquad x\in\mathbb{R}^d,\quad r>0.
\]
Assume also that $\mu$ has Fourier decay of order $0<\beta<d$ in the sense of Definition \ref{def:fourier-decay-measure}. Let
\[
    p\geq p_\ast(s,\beta,d)
\]
with $p_\ast$ defined in \eqref{eq:mmbs-threshold}. If $k$ satisfies the Bessel domination hypothesis \eqref{eq:bessel-weight-domination}, then the upper bound \eqref{eq:capacity-upper-bound-frostman-mmbs} holds with $\gamma=s$ and this choice of $\mu$.
\end{example}

\begin{proof}
The hypotheses place $\mu=\mathcal{H}^s\lfloor K$ within the scope of Theorem \ref{thm:capacity-upper-bound-frostman-mmbs}, with $\gamma=s$. Applying that theorem gives the stated estimate.
\end{proof}
Although $\lambda = \inf_{x \in K} w(x) > 0$, highly sparse regions in $K$ can cause $\lambda \to 0$, leading the upper bound to diverge. Bounding $\lambda$ uniformly away from zero requires geometric thickness conditions on $K$, such as Ahlfors-David (AD) regularity. Indeed, suppose $K$ is $\gamma$-dimensional AD-regular with constant $A > 0$. Fixing $0 < r \le \operatorname{diam}(K)$ and noting that $G_\alpha$ is strictly decreasing and radial, we obtain for any $x \in K$:$$(G_\alpha \ast \mu)(x) \geq \left( \inf_{y \in B(x,r)} G_\alpha(x-y) \right) \mu(B(x,r)) \geq G_\alpha(z_r) \frac{r^\gamma}{A} > 0,$$where $\vert{}z_r\vert{} = r$. Since convolution with the positive mollifier $\rho_\epsilon$ preserves this uniform lower bound, $\lambda$ is bounded away from zero independently of $x$. We defer a full exploration of these geometric conditions to future research.

\subsection{Positive-measure Cantor examples}
\label{subsec:fat-cantor-examples}

The weighted Lebedev criterion is especially useful for compact sets of positive measure whose boundary is large (for example, sets whose interior is not dense at any point) but still has upper Minkowski dimension strictly below $d$.

\begin{example}[Positive-measure Cantor sets]
\label{ex:fat-cantor-lebedev}
Let $K\subset\mathbb{R}^d$ be a compact set with $|K|>0$ and
\[
    \alpha=\overline{\dim}_{M}(\partial K)<d.
\]
Then Proposition \ref{prop:weighted-lebedev-lower-bound} gives a positive lower bound for $c_{p,k}(K)$ for every tempered weight satisfying \eqref{eq:weighted-lebedev-weight-condition} and every conjugate exponent $p'$ in the range \eqref{eq:weighted-lebedev-threshold}.

This applies, in particular, to positive-measure Cantor-type sets for which the boundary Minkowski dimension can be estimated. For instance, the family of one-dimensional fat Cantor sets obtained from the Cantor function by the Liu--Pego construction \cite{Liu02072024} gives compact sets of positive measure; taking products gives higher-dimensional examples. Once an upper Minkowski bound for the boundary is available, the criterion above yields the corresponding weighted Fourier-capacity lower bound.
\end{example}

\bibliographystyle{amsplain}
\bibliography{ref}
\end{document}